\numberwithin{equation}{section}
\def\RR{{\mathbb R}}
\def\ZZ{{\mathbb Z}}
\def\ssm{\smallsetminus}
\def\g{\gamma}
\def\int{{\rm int}}
\newcommand{\eps}{\varepsilon}
\newcommand{\p}{\partial}
\def\Ical{{\mathcal I}}
\def\half{{\tfrac{1}{2}}}
\def\pfrak{\mathfrak{p}}
\def\pt{{\scriptscriptstyle\bullet}}
\newcommand\gr{\operatorname{gr}}
\newcommand\Hom{\operatorname{Hom}}
\newtheorem{theorem}{Theorem}[section]
\newtheorem{lemma}[theorem]{Lemma}
\newtheorem{corollary}[theorem]{Corollary}
\theoremstyle{remark}
\newtheorem{example}[theorem]{Example}
\newtheorem{remark}[theorem]{Remark}
\title[Motivic fundamental group rings]{On the motivic description of truncated fundamental group rings}
\begin{document}
\author{Eduard Looijenga}\thanks{Research for this paper was done when the author was supported by the Jump Trading Mathlab Research Fund}
\address{Mathematics Department, University of Chicago (USA) and Mathematisch Instituut, Universiteit Utrecht (Nederland)}
\email{looijenga@uchicago.edu, e.j.n.looijenga@uu.nl}

\subjclass[2020]{Primary:20F34}
\keywords{Fundamental groupoid, Hopf algebra}

\begin{abstract}
A topological theorem that appears in a paper by Deligne-Goncharov \cite{dg} states the following. Let  $(X,*)$ be a path connected  pointed space with a  reasonable topology and denote by $\Ical$ the augmentation ideal  of the group ring $\ZZ\pi_1(X,*)$. Then for every field $F$ and positive  integer $n$,  the space of $F$-valued linear forms on  $\Ical/\Ical^{n+1}$ is naturally isomorphic to $H^n(X^n,\text{a certain subspace of }X^n; F)$. 

Among other things we give a simple construction of  an isomorphism between 
$\Ical/\Ical^{n+1}$ and the integral $H_n$ of this pair and  express the maps that define the Hopf algebra structure on $\ZZ\pi_1(X,*)/\Ical^{n+1}$ in these terms.
\end{abstract}

\maketitle

\section*{Introduction}
This note arose from a desire to better understand a theorem in a paper by Deligne-Goncharov (\cite{dg}, Prop.\ 3.4), who attribute this to  Beilinson. The theorem (and its proof) is of a topological nature and  easy to state. 
In the version that we discuss here it amounts to the   following. 

Let $X$ be  a  connected topological space with a reasonable topology (we are more specific about this below) and let $a, b\in X$. Consider the set $\pi_X(a,b)$ of homotopy classes of  paths in $X$ from $a$ to $b$. Since this is a principal right $\pi_1(X,a)$-set, the free abelian group  
$\ZZ\pi_X(a,b)$ it generates  is a principal  right $\ZZ\pi_1(X,a)$-module (it is also a principal  left $\ZZ\pi_1(X,b)$-module). So if $\Ical_a\subset \ZZ\pi_1(X,a)$ denotes the augmentation ideal, then the quotient module $\ZZ\pi_X(a,b)/\ZZ\pi_X(a,b)\Ical_a^{n+1}$  is a principal right module for the truncated group ring  $\ZZ\pi_1(X,a)/\Ical_a^{n+1}$. Letting $X(n)^a_b\subset X^n $ stand for  the set of $(x_1, \dots, x_n)\in X^n$ for which one of the following identities $x_1=a$, $x_\nu=x_{\nu +1}$ ($\nu=1, \dots , n-1$) or $x_n=b$ holds, then the  theorem identifies the  relative homology group $H_n(X^n,X(n)^a_b)$ with $\ZZ\pi_X(a,b)/\ZZ\pi_X(a,b)\Ical_a^{n+1}$  if $b\not=a$ and with $\Ical_a/\Ical_a^{n+1}$ if $b=a$.

This has immediate  applications in algebraic geometry, as it gives us a motivic interpretation  of the  abelian groups  $\ZZ\pi_X(a,b)/\ZZ\pi_X(a,b)\Ical_a^{n+1}$  when  $X$ is a connected complex algebraic variety. In particular, this provides an alternate way to define mixed Hodge structures on these (originally due to Hain \cite{hain:pi1} and based on the reduced bar complex). Our motivation is  however different, for we expect this theorem (and its proof ingredients) to help us understand the way a mapping class group acts on the cohomology of the  configuration spaces of a surface. 

The result as stated above strengthens that in \cite{dg} (where the dual version is proved with field coefficients). 
We give in addition a simple  explicit  description of the map from  $\ZZ\pi_X(a,b)/\ZZ\pi_X(a,b)\Ical_a^{n+1}$ 
to $H_n(X^n,X(n)^a_b)$. We complete the story by verifying that the structural  maps involving the abelian groups 
$\ZZ\pi_X(a,b)/\ZZ\pi_a(X)\Ical_a^{n+1}$ (truncation, inversion, composition,  and in case $a=b$, the coproduct) have homologically defined  counterparts. This last property guarantees  that when $X$ is a connected complex algebraic variety, these structural map are morphisms of mixed Hodge structures. Somewhat  surprisingly, the composition map, which here  takes the form of Theorem \ref{thm:composition}, turned out to be the least trivial in this respect.

Where the proof of the Beilinson-Deligne-Goncharov is concerned, ours is in the end probably not so different from theirs. Yet we feel that the strengthening presented here and its proof  make the discussion more direct (if not shorter) and for a topologist perhaps also more transparent.  We also pay attention to homotopy aspects.

\smallskip
I thank Sidanth Venka Raman for carefully reading an earlier version of this paper. His questions prompted me to recast some of the arguments.

\smallskip
Throughout this note  $X$ stands for a  connected, compactly generated topological space which admits a CW-structure.
This guarantees that a cell decomposition of $X$ determines one of $X^n$ and that a projection on a factor is cellular.
When $n\ge 2$, we can always refine the product cell decomposition of $X^n$ such that each diagonal locus is a subcomplex and the projection onto a factor is still cellular.

\section{The isomorphism of Beilinson-Deligne-Goncharov} 
Recall that the fundamental groupoid $\pi_X$ of $X$ has  object set $X$  with for $a,b\in X$  the set $\pi_X(a,b)$ of homotopy classes of paths from $a$ to $b$ as its hom set. We abbreviate  
 $\pi_a$ for  $\pi_1(X,a)$ and write  $\Ical_a\subset \ZZ\pi_a$ for the augmentation ideal of the group ring $\ZZ\pi_a$.  We regard  $ \ZZ\pi_X(a,b)$ as a principal left $\ZZ\pi_b$-module and as   principal right $ \ZZ\pi_a$-module.  Note that $\Ical_b^{n+1} \ZZ\pi_X(a,b)= \ZZ\pi_X(a,b)\Ical_a^{n+1}$; we shall write $\ZZ\pi_X(a,b)_n$ for the quotient
 of $\ZZ\pi_X(a,b)$ by this submodule.

The map which assigns to an element of $\pi_X(a,b)$ its image in 
$H_1(X,\{a,b\})$ induces a homomorphism $ \ZZ\pi_X(a,b)/\Ical_a^2\to H_1(X,\{a,b\})$. This is an isomorphism 
if $a\not=b$ and is surjective with kernel $\ZZ 1_a$ if $a=b$. We shall give an interpretation of $ \ZZ\pi_X(a,b)/\Ical_a^{n+1}$
that generalizes this observation.
\\

Since we use the categorical convention for which composition of paths is read from right to left, it is convenient to index the factors of $X^n$ in opposite order, so that  a typical point of $X^n$ is denoted $(x_n, \dots, x_1)$.

For an  integer $n\ge 1$ and  a pair $(a,b)\in X^2$, we define subsets of $X^n$ by 
\begin{gather*}
X(n)^a:=(x_1=a) \cup (x_1=x_2)\cup\cdots\cup  (x_{n-1}=x_{n}),\\
X(n)_b:=(x_1=x_2)\cup\cdots\cup (x_{n-1}=x_{n})\cup (x_n=b),\\
X(n)_b^a:=(x_1=a) \cup (x_1=x_2)\cup\cdots\cup (x_{n-1}=x_{n})\cup (x_n=b),
\end{gather*}

Let $\triangle^n$ stand for the $n$-simplex in $\RR^n$ that consists of the $(t_n,\dots, t_1)\in \RR^n$ defined by $0\le t_1\le t_2\le\cdots \le t_n\le 1$ (we stipulate that  $\triangle^0=\{0\}$). 
If $\g :[0,1]\to X$ is a path from $a$ to $b$, then  $\g^n: [0,1]^n\to X^n$ maps $\triangle^n$ to $X^n$ with $\p\triangle^n$ mapping to 
$X(n)^a_b$  and hence defines a class in $H_n(X^n,X(n)^a_b)$. It is clear that this relative homology class only depends on the relative homotopy class $[\g]\in \pi_X(a,b)$ of $\g$. We therefore denote that class by 
$\triangle^n[\g]$. We thus have defined a homomorphism of abelian groups
\begin{equation}\label{eqn:motivic1}
\tilde\kappa_n:  \ZZ\pi_X(a,b)\to H_n(X^n,X(n)^a_b).
\end{equation}
It is clear that the passage from $\g$ to its inverse $\g^{-1}$ defines an isomorphism of $\ZZ\pi_X(a,b)\to \ZZ\pi_X(bra)$ of abelian groups. Its homological counterpart  is given  by the  order reversal  that yields the isomorphism 
$H_n(X^n,X(n)^a_b)\to H_n(X^n,X(n)^b_a)$.

Here is our version of a theorem in a paper by  Deligne-Goncharov, who, as mentioned,  attribute this to Beilinson. 

\begin{theorem}[Dual, integral version of \cite{dg}, Prop.\ 3.4] \label{thm:motivic}
The pair $(X^n,X(n)^a_b)$ is  $(n-1)$-connected and the 
map $\tilde\kappa_n$ in \eqref{eqn:motivic1} factors through a surjection 
\begin{equation}\label{eqn:motivic}
\kappa_n:  \ZZ\pi_X(a,b)_n\to H_n(X^n,X(n)^a_b)
\end{equation}
whose kernel is trivial unless  $a=b$, in which case it equals $\ZZ 1_a$.
\end{theorem}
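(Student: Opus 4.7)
The plan is to induct on $n$, treating the three assertions---$(n-1)$-connectedness, the factoring of $\tilde\kappa_n$ through $\Ical_a^{n+1}$, and the kernel/surjectivity of $\kappa_n$---in parallel. For $n=1$, the pair $(X,\{a,b\})$ is $0$-connected because $X$ is path connected, and the classical identification $\Ical_a/\Ical_a^2 \cong H_1(X)$ (together with an easy case analysis for $a\ne b$) handles everything else.

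For the inductive step I would exploit the triple $X(n)_b \subset X(n)^a_b \subset X^n$. The subspace $V_0:=\{x_1=a\}\subset X^n$ is homeomorphic to $X^{n-1}$ via the projection forgetting $x_1$, and under this identification $V_0\cap X(n)_b$ maps to $X(n-1)^a_b$ in the shifted coordinates $(x_n,\dots,x_2)$. Since $X(n)^a_b = V_0\cup X(n)_b$ is a good pair, excision gives
\[
H_\ast(X(n)^a_b,\,X(n)_b) \;\cong\; H_\ast(V_0,\,V_0\cap X(n)_b) \;\cong\; H_\ast(X^{n-1},\,X(n-1)^a_b),
\]
which by the inductive hypothesis is $\ZZ\pi^a_b/\ZZ\pi^a_b\Ical_a^n$ in degree $n-1$ and zero below.

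To close the induction I need the parallel statement that $(X^n, X(n)_b)$ is itself $(n-1)$-connected; this I would handle by its own induction, using the triple $X\times X(n-1)_b \subset X(n)_b \subset X^n$ together with the K\"unneth identification $H_\ast(X^n,\,X\times X(n-1)_b) \cong H_\ast(X)\otimes H_\ast(X^{n-1},\,X(n-1)_b)$. Feeding both inputs into the long exact sequence of the original triple,
\[
H_n(X^n,X(n)_b) \to H_n(X^n,X(n)^a_b) \xrightarrow{\partial} H_{n-1}(X^{n-1},X(n-1)^a_b) \to H_{n-1}(X^n,X(n)_b),
\]
the target of $\partial$ is $\ZZ\pi^a_b/\ZZ\pi^a_b\Ical_a^n$ and the right-hand term vanishes by the parallel induction, so $\partial$ is surjective; a matching computation then identifies $H_n(X^n,X(n)^a_b)$ with $\ZZ\pi^a_b/\ZZ\pi^a_b\Ical_a^{n+1}$ up to the $\ZZ$ kernel that appears only when $a=b$ (tracked via the image of $H_n(X^n,X(n)_b)$ and ultimately coming from the constant-path class $1\in\Rcal_a$, whose image under $\tilde\kappa_n$ is a single point of $X(n)^a_b$).

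Finally, the vanishing of $\tilde\kappa_n$ on $\ZZ\pi^a_b\Ical_a^{n+1}$ is a direct chain-level check. Given loops $\alpha_0,\dots,\alpha_n$ based at $a$ and a path $\gamma: a\to b$, expand $\gamma\cdot(\alpha_0-1)\cdots(\alpha_n-1)$ as a signed sum over subsets $S\subset\{0,\dots,n\}$ of terms $\gamma\prod_{i\in S}\alpha_i$. Subdividing $\Delta^n$ according to the concatenation structure of each path $\gamma\prod_{i\in S}\alpha_i$ and applying inclusion--exclusion on $S$, the surviving contributions come from small simplices whose label $\phi:\{1,\dots,n\}\to\{\alpha_0,\dots,\alpha_n,\gamma\}$ (recording which concatenation piece each $t_k$ lies in) hits every loop $\alpha_i$; by pigeonhole no such $\phi$ exists since we have only $n$ coordinates for $n+1$ loops, so the sum vanishes in $H_n(X^n,X(n)^a_b)$. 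The main obstacle in this plan is the parallel induction on $(X^n,X(n)_b)$---requiring careful K\"unneth bookkeeping and excision---together with tracking the $\ZZ$ kernel in the $a=b$ case through the connecting map of the long exact sequence.
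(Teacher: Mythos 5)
Your plan is a genuinely different route from the paper's: you run the long exact sequence of the triple $(X^n, X(n)^a_b, X(n)_b)$ together with the excision $H_*(X(n)^a_b,X(n)_b)\cong H_*(X^{n-1},X(n-1)^a_b)$, whereas the paper fibers $(X^n,X(n)^a_b)$ over the last coordinate, reinterprets $H_n$ as $H_1(\tilde X_n,p^{-1}a)$ for the covering $\tilde X_n\to X$ attached to $\pi_a\to \Rcal_a/\Ical_a^n$, and then computes that group from the exact cellular complex $\Rcal_a^{E_2}\to\Rcal_a^{E_1}\to\Ical_a\to 0$ of the universal cover by tensoring with $\Rcal_a/\Ical_a^n$. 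Several pieces of your skeleton are sound: the parallel induction for $(X^n,X(n)_b)$ via the diagonal embedding and K\"unneth is exactly the paper's Lemma \ref{lemma:motivic_fg} (stated there for $X(n)^a$, symmetric for $X(n)_b$), and your inclusion--exclusion/pigeonhole argument that $\tilde\kappa_n$ kills $\ZZ\pi^a_b\Ical_a^{n+1}$ is a correct standard chain-level computation (modulo checking that the small cells for different subsets $S$ with the same essential labeling represent the same relative class).

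The genuine gap is in the phrase ``a matching computation then identifies $H_n(X^n,X(n)^a_b)$ with $\ZZ\pi^a_b/\ZZ\pi^a_b\Ical_a^{n+1}$.'' Your exact sequence only exhibits $H_n(X^n,X(n)^a_b)$ as an extension of $\ZZ\pi^a_b/\Ical_a^n$ by the image of $H_n(X^n,X(n)_b)\cong H_1(X)^{\otimes n}$. Comparing with the algebraic extension $0\to\Ical_a^n/\Ical_a^{n+1}\to\ZZ\pi^a_b/\Ical_a^{n+1}\to\ZZ\pi^a_b/\Ical_a^n\to 0$, surjectivity of $\kappa_n$ follows once you check the triangle through $H_1(X)^{\otimes n}$ commutes, but injectivity reduces to showing that
\[
\ker\bigl(H_1(X)^{\otimes n}\to H_n(X^n,X(n)^a_b)\bigr)\ \subseteq\ \ker\bigl(H_1(X)^{\otimes n}\to \Ical_a^n/\Ical_a^{n+1}\bigr),
\]
and nothing in your setup controls the left-hand kernel: it is the image of a boundary map out of $H_{n+1}(X(n)^a_b,X(n)_b)\cong H_{n+1}(X^{n-1},X(n-1)^a_b)$, a group the inductive hypothesis says nothing about (the induction only gives $(n-2)$-connectedness and $H_{n-1}$). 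Note that $H_1(X)^{\otimes n}\to\Ical_a^n/\Ical_a^{n+1}$ is in general not injective (it is an isomorphism only when, e.g., $\pi_a$ is free), so this is not a formality; the statement that the kernel of the truncation map on $H_n$ is exactly $\Ical_a^n/\Ical_a^{n+1}$ is the content of the paper's Corollary \ref{cor:motivic_fg}, which is \emph{deduced from} the theorem rather than used to prove it. This is precisely where the paper's covering-space argument earns its keep: exactness of the augmented cellular complex of the universal cover, tensored over $\Rcal_a$ with $\Rcal_a/\Ical_a^n$, produces $\Ical_a\otimes_{\Rcal_a}(\Rcal_a/\Ical_a^n)\cong\Ical_a/\Ical_a^{n+1}$ on the nose, with no extension problem left over. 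To complete your route you would need an independent computation of that boundary image (or of $H_{n+1}(X^{n-1},X(n-1)^a_b)$ in the relevant range), which is not supplied.
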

\begin{proof}
We already observed that  the theorem  holds for $n=1$. We proceed  with induction on $n$ and  assume $n\ge 2$. Let $f:(X^n,X(n)^a_b)\to X$  denote  the projection on the last factor.  

\vskip2mm

\emph{Step 1: $(X^n,X(n)^a_b)$ is  $(n-1)$-connected.}

The fiber pair of $f$ over $x\in X$ is  a copy of $(X^{n-1},X(n)^a_x)$ unless 
$x=b$, in which case the fiber pair is a copy of $(X^{n-1},X^{n-1})$. 
By our induction hypothesis, these fiber pairs  are $(n-2)$-connected.
The assertion  is a formal consequence of this property and the fact that $(X,\{b\})$ is $0$-connected.	
\vskip2mm

Choose a CW structure on $X$ and $X^n$ for which $f$ is a cellular map and $\{a,b\}$ resp.\  $X(n)^a_b$  are 
subcomplexes. By possibly further refinement, we can assume that for every $0$-cell $\{c\}$ distinct from $\{b\}$  there is a $1$-cell $e_c$ of $X$  connecting  $b$ and $c$ (\footnote{If $X$ is a topological manifold, we can in fact arrange that $a$ and $b$ (or even just $b$) are the only $0$-cells. This would simplify the subsequent discussion a bit.}). We  orient  all the cells of positive  dimension and assume that  $e_c$ goes from $b$ to $c$. We write $E_k$ for the collection of $k$-cells of $X$ and denote by $E_1(b)\subset E_1$ the collection $\{e_c\}_c$ just defined.

Let $(\hat X, \hat a)\to (X,a)$ be the universal cover of $(X,a)$: a point of $\hat X$ over $x\in X$ is specified by an element of $\pi_X(a,x)$. The CW structure on $X$ lifts to one on $\widehat X$: a cell of  $\hat X$ over a given cell $e$ of $X$ is specified by an element of $\pi_{X}(a,x)$ for some $x\in e$. Since $e$ is simply connected, this is independent of $x$ and so we may  just as well denote this set (a right torsor of $\pi_a$) by $\pi_X(a,e)$. 
This identifies the augmented cellular chain complex of  $\hat{X}$ with the complex of right $\ZZ\pi_a$-modules
\begin{equation}\label{eqn:piresolution}
\cdots\to \oplus_{e\in E_2} \ZZ\pi_X(a,e)\xrightarrow{\p_2} \oplus_{a\in E_1} \ZZ\pi_X(a,e)\xrightarrow{\p_1} \oplus_{a\in E_0} \ZZ\pi_X(a,e) \xrightarrow{\eps}  \ZZ\to 0,
\end{equation}
where $\eps$ takes the value $1$ on $\cup_{a\in E_0}\pi_X(a,e)$. 
The cover $\hat{X}$ is simply connected and so this  complex is exact in homological degree $\le 1$. Since $\eps$  is an isomorphism when restricted to the first summand of $\ZZ\pi_X(a,a)=\ZZ\pi_a =\ZZ\oplus\Ical_a$,  the  quotient 
\begin{equation}\label{eqn:Iresolution2}
K_\pt: 0\to\oplus_{e\in E_2} \ZZ\pi_X(a,e)\xrightarrow{\p_2} \oplus_{e\in E_1} \ZZ\pi_X(a,e)\to \oplus_{e\in E_0\ssm \{a\}} \ZZ\pi_X(a,e)\to 0.
\end{equation}
of \eqref{eqn:piresolution} is exact  in degree zero and has  homology in degree 1 equal to $\Ical_a$.

Alternatively, the subcomplex 
\[
K'_\pt: 0\to \oplus_{e\in E_1(b)} \ZZ\pi_X(a,e)\xrightarrow{\p_1} \oplus_{e\in E_0} \ZZ\pi_X(a,e) \xrightarrow{\eps}  \ZZ\to 0
\]
of the complex \eqref{eqn:piresolution} has homology equal to $\Ical_a$ (in degree $0$), so that the quotient complex 
\begin{equation}\label{eqn:presI}
K_\pt/K'_\pt: 0\to \oplus_{e\in E_2} \ZZ\pi_X(a,e)\xrightarrow{\p'_2} \oplus_{e\in E_1\ssm E_1(b)} \ZZ\pi_X(a,e)\to 0
\end{equation}
has homology in degree $1$ (which is the cokernal of $\p_2'$) also equal to $\Ical_a$.
\vskip2mm

\emph{Step 2: Proof that $\tilde\kappa_n$ induces the surjection $\kappa_n$ with kernel $\ZZ 1_a$ when $b=a$.}

Since $f$ is cellular,  our induction hypothesis implies that the  preimage of a cell $e$ of $X\ssm \{a\}$ gives a topological pair that is $(n-2)$-connected and has its  homology in degree $n-1$ identified with $\ZZ\pi_X(a,e)_{n-1}$. A standard  spectral sequence argument then shows that  $H_n(X^n,X(n)^a_a)$ is the homology in degree one of the  complex of $\ZZ\pi_a$-modules
\[
C_\pt : 0\to\oplus_{e\in E_2}\ZZ\pi_X(a,e)_{n-1}\to \oplus_{e\in E_1}\ZZ\pi_X(a,e)_{n-1}\to  \oplus_{e\in E_0\ssm \{a\}}\ZZ\pi_X(a,e)_{n-1}\to 0.
\]
So it remains to identify $H_1(C_\pt)$ with $\Ical_a/\Ical_a^{n+1}$. Observe  that  $C_\pt\cong K_\pt\otimes_{\ZZ\pi_a} \ZZ\pi_a/\Ical_a^{n}$ and so  $H_1(C_\pt)\cong \Ical_a\otimes_{\ZZ\pi_a} (\ZZ\pi_a/\Ical_a^n)$.  To complete the argument, consider the exact sequence of $ \ZZ\pi_a$-modules 
\[
0\to \Ical_a^{n}\to \ZZ\pi_a\to \ZZ\pi_a/\Ical_a^n\to 0.
\]
This remains exact if  we apply the right exact functor $\Ical\otimes_{\ZZ\pi_a}$:
\[
\Ical_a\otimes _{\ZZ\pi_a} \Ical_a^{n}\to \Ical_a\to \Ical_a\otimes _{\ZZ\pi_a}(\ZZ\pi_a/\Ical_a^n)\to 0.
\]
The image of the first map is $\Ical_a^{n+1}\subset \Ical_a$ and so $\Ical_a\otimes _{\ZZ\pi_a}\ZZ\pi_a/\Ical_a^n\cong \Ical_a/\Ical_a^{n+1}$.
We leave it to the reader to check that our induction hypothesis also shows that the isomorphism 
$\ZZ\pi_{a,n}/\ZZ1_a \cong \Ical_a/\Ical_a^{n+1}\cong H_n(X^n,X(n)^a_a)$ thus obtained is the one defined by $\kappa_n$.
\vskip2mm

\emph{Step 3: Proof that $\tilde\kappa_n$ induces the isomorphism $\kappa_n$ when $b\not= a$.}

The proof is similar to that of step 1. The preimage of a cell $e$ of $X$ is a topological pair that is still $(n-2)$-connected, the difference  is  that  our induction hypothesis now tells us that its homology in degree $n-1$ is  
$\pi_a/(\ZZ 1_a+\Ical_a^n)$ when $e=\{a\}$,  trivial when $e=\{b\}$, and $\ZZ\pi_X(a,e)_{n-1}$ otherwise.  So if we proceed as in step 1, then we find that  $H_n(X^n,X(n)^a_b)$  is now identified with $H_1$ of the complex 
\begin{multline*}
B_\pt: 0\to \oplus_{e\in E_2}\ZZ\pi_X(a,e)_{n-1}\to  \oplus_{a\in E_1}\ZZ\pi_X(a,e)_{n-1}\to \\
\to \oplus_{e\in E_0\ssm \{b\}}\ZZ\pi_X(a,e)_{n-1}/\ZZ 1_a\to 0.
\end{multline*}
This contains the subcomplex
\[
B'_\pt: 0\to \oplus_{\{c\}\in E_0\ssm \{b\}}\ZZ\pi_X(a,e_c)_{n-1}\to\oplus_{\{c\}\in E_0\ssm \{b\}}\ZZ\pi_X(a,c)_{n-1}/\ZZ 1_a\to 0.
\]
of which the middle map is given by $\sum_{c\in E_0(b)} e_cu_c \mapsto \sum_{c\not=b} (c)u_c \mod{\ZZ 1_a}$, 
where  $u_c\in \ZZ\pi_a/\Ical_a^n$. So this map is onto with kernel $\ZZ e_a$. The quotient complex has the form 
\begin{equation}\label{eqn:B/B'}
B_\pt/B'_\pt: 0\to \oplus_{e\in E_2}\ZZ\pi_X(a,e)_{n-1}\xrightarrow{\p}  \oplus_{e\in E_1\ssm E_1(b)}\ZZ\pi_X(a,e)_{n-1}\to 0,
\end{equation}
which is just the complex \eqref{eqn:presI} tensored with $\ZZ\pi_a/\Ical_a^n$.
Its homology in degree $1$ (the cokernel of the above map) is therefore 
$\Ical_a\otimes \ZZ\pi_a/\Ical_a^n$, which we showed, can be identified  with $\Ical_a/\Ical_a^{n+1}$.
So the long   exact sequence associated to the pair $(B_\pt, B'_\pt)$ is of the form
\begin{equation}\label{eqn:noncan}
0\to H_2(B_\pt)\to H_2(B_\pt/B'_\pt)\to \ZZ e_a\to H_n(X^n,X(n)^a_b)\to  \Ical_a/\Ical_a^{n+1}\to 0.
\end{equation}
No multiple of $e_a$ is a boundary of $B_\pt$. So  $e_a\ZZ\to H_n(X^n,X(n)^a_b)$ is injective. This makes
$H_n(X^n,X(n)^a_b)$ appear as an extension of $\Ical_a/\Ical_a^{n+1}$ by $\ZZ e_a$. 

We make this extension explicit in terms of $\kappa_n$. By our induction hypothesis, a typical generator of $H_{n-1}(X^{n-1},X(n-1)^a_a)$ is of the form $\triangle^{n-1}[\g]$, where $\g: [0,1]\to X$ is a loop based at $a$. Let $\g_o:[0,1]\to X$ be a parametrization of the closure of the $1$-cell $e_a$ with $\g_o(0)=a$ and $\g_o(1)=b$. The composite $\g_o\g$ is a path from $a$ to $b$ with  the property that  $\triangle^n[\g_o\g]\in H_{n}(X^{n},X(n)^a_a)$  restricted to $t_n=1$ gives us $\triangle^{n-1}[\g]$. This defines a section of the map $H_n(X^n,X(n)^a_b)\to  \Ical_a/\Ical_a^{n+1}$ constructed above. This also shows that $H_n(X^n,X(n)^a_b)$ is naturally identified with
$\ZZ\pi_X(a,b)_n$ (this also shows that  the sequence \eqref{eqn:noncan} cannot be canonical; indeed,  it involves the $1$-cell $e_a$).
\end{proof}

\begin{remark}\label{rem:}
It is clear from the definition that $\kappa_n$ naturally lifts to a homomorphism  $\ZZ\pi_X(a,b)/\Ical_{a}^{n+1}\to\pi_n(X^n,X(n)^a_b)$. Theorem \ref{thm:motivic} and the   Hurewicz theorem  imply that  this has also kernel $\ZZ 1_a$ when $a=b$ and  is an isomorphism (when $a\not=b$), provided that $n\ge 2$.
In particular,  for $n\ge 2$, the Postnikov system of the pair $(X^n, X(n)^a_b)$ begins properly at stage $n$: it is then the 
Eilenberg-MacLane space $K(G,n)$, where  $G$ equals  $\Ical_a/\Ical_a^{n+1}$ or $\ZZ\pi_b^a/\ZZ\pi_b^a\Ical_a^{n+1}$ depending on whether or not $a=b$.
We may therefore  regard this as  the `motivic homotopy incarnation'  of  that group.
\end{remark}

\begin{remark}\label{rem:}
The covering of $X(n)^a_b$  by its closed subsets $(x_1=a), (x_1=x_2), \dots , (x_{n-1}=x_n), (x_n=b)$ has a 
nerve which is the boundary of an $(n+1)$-simplex  (so a topological $S^n$) when   $a\not= b$ and is
a cone  when $a=b$ (for then their common  intersection is $(a,a,\dots ,a)$ and  hence nonempty). This accounts for the change of $\ker(\kappa_n)$ in these two cases.
\end{remark}

\section{The coproduct}\label{sect:coproduct} The coproduct $\ZZ\pi_a\to \ZZ\pi_a\otimes\ZZ\pi_a$ comes from the group homomorphism 
$\pi_a=\pi_1(X,a)\to \pi_1(X^2, a^2)=\pi_a\times\pi_a$ induced by the diagonal embedding $D: X\to X^2$. We identify the group ring $\ZZ(\pi_a\times\pi_a)$ with  $\ZZ\pi_a\otimes\ZZ\pi_a$, so that its augmentation ideal
is $\Ical_a\otimes \ZZ\pi_a+\ZZ\pi_a\otimes \Ical_a$.
This group homomorphism gives a  map 
\begin{multline*}
\Ical_a/\Ical_a^{n+1}\to (\Ical_a\otimes \ZZ\pi_a+\ZZ\pi_a\otimes \Ical_a)/(\Ical_a\otimes \ZZ\pi_a+\ZZ\pi_a\otimes \Ical_a)^{n+1}=\\
\textstyle =(\Ical_a\otimes \ZZ\pi_a+\ZZ\pi_a\otimes \Ical_a)/(\sum_{i+j=n+1} \Ical_a^i\otimes \Ical_a^j). 
\end{multline*}
Via Theorem \ref{thm:motivic} this is the map $H_n(X^n,X(n)^a_a)\to H_n(X^{2n},X^2(n)^{a^2}_{a^2})$ induced by $D$.  So if we recall that the primitive part $\pfrak(H)$ of a bi-algebra $H$ consists of the $a\in H$  that under comultiplication are mapped to $a\otimes 1+1\otimes a$, we find: 

\begin{corollary}\label{cor:}
The primitive part $\pfrak(\ZZ\pi_a/\Ical_a^{n+1})$ of $\ZZ\pi_a/\Ical_a^{n+1}$  maps via $\kappa_n$ isomorphically onto the
equalizer of the pair 
\[
(D_*, i'_*+i''_*) :H_n(X^n,X(n)^a_a)\rightrightarrows  H_n(X^{2n},X^2(n)^{a^2}_{a^2}),
\]
where $i', i'': (X, a)\hookrightarrow (X^2,a^2)$ are defined by $x\mapsto (x,a)$ resp.\  $x\mapsto (a,x)$.
\end{corollary}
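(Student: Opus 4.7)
The plan is to leverage naturality of $\kappa_n$ together with Theorem \ref{thm:motivic} applied to the pointed space $(X^2,(a,a))$, and to reduce the equalizer condition to the defining equation of primitives. Under the canonical identification $\ZZ[\pi_a \times \pi_a] = \Rcal_a \otimes \Rcal_a$, the maps induced on group rings by $D, i', i''$ are the coproduct $\Delta$ and the two unit insertions $x \mapsto x\otimes 1$ and $x \mapsto 1\otimes x$, respectively. Naturality of $\kappa_n$ in the pointed space, evident from its definition via $[\g^n|\Delta^n]$, then intertwines these algebraic operations with $D_*, i'_*, i''_*$ through $\kappa_n$ and the surjection $\kappa_n': (\Rcal_a\otimes\Rcal_a)/\Ical(\Rcal_a\otimes\Rcal_a)^{n+1} \to H_n((X^2)^n, X^2(n)^{(a,a)}_{(a,a)})$ supplied by Theorem \ref{thm:motivic} applied to $(X^2,(a,a))$, whose kernel is $\ZZ\cdot(1\otimes 1)$.

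For the forward inclusion, I would note that any primitive $x$ satisfies $\eps(x)=0$ (apply $1\otimes \eps$ to $\Delta x = x\otimes 1 + 1\otimes x$), so $\pfrak(\Rcal_a/\Ical_a^{n+1}) \subset \Ical_a/\Ical_a^{n+1}$, on which $\kappa_n$ is an isomorphism by Theorem \ref{thm:motivic}. The intertwining then gives $D_*\kappa_n(x) = \kappa_n'(\Delta x) = \kappa_n'(x\otimes 1 + 1\otimes x) = (i'_*+i''_*)\kappa_n(x)$, placing $\kappa_n(\pfrak)$ in the equalizer.

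The step I expect to require a little care is the reverse inclusion, since $\kappa_n'$ has nontrivial kernel. Given $y$ in the equalizer, take the unique $x \in \Ical_a/\Ical_a^{n+1}$ with $\kappa_n(x)=y$; the intertwining turns the equalizer condition into $\Delta x - x\otimes 1 - 1\otimes x \in \ker\kappa_n' = \ZZ\cdot(1\otimes 1)$, which a priori is weaker than primitivity. However, $x\in\Ical_a$ forces all three of $\Delta x$, $x\otimes 1$, and $1\otimes x$ into the augmentation ideal $\Ical(\Rcal_a\otimes\Rcal_a) = \Ical_a\otimes\Rcal_a + \Rcal_a\otimes\Ical_a$, whose intersection with $\ZZ\cdot(1\otimes 1)$ is trivial (as the augmentation of $1\otimes 1$ is $1$). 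Hence the discrepancy vanishes, $x$ is primitive, and $\kappa_n$ maps $\pfrak$ isomorphically onto the equalizer.
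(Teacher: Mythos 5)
Your proposal is correct and follows essentially the same route as the paper, which derives the corollary directly from the identification of the coproduct with $D_*$ (and of the two unit insertions with $i'_*$, $i''_*$) together with Theorem \ref{thm:motivic}; the paper treats this as immediate and gives no further argument. Your extra care in the reverse inclusion --- checking that the discrepancy lands in $\Ical(\Rcal_a\otimes\Rcal_a)\cap\ZZ\cdot(1\otimes 1)=0$ --- correctly fills in the one detail the paper leaves implicit, namely that the $\ZZ$-kernels of $\kappa_n$ and $\kappa_n'$ cause no loss.
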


It would of course be better to have a  more direct characterization of  the primitive part.

\section{The truncation map}\label{sect:trunc}
For $n\ge 2$, the embedding  $(x_{n-1},\dots, x_1)\in X^{n-1}\mapsto (b, x_{n-1},\dots, x_1)\in X^n$ defines a relative homeomorphism $(X^{n-1},X(n-1)^a_b)\to (X(n)^a_b,X(n)^a)$. If we substitute this in homology exact sequence of the triple  $(X^n,X(n)^a_b, X(n)^a)$ we find the exact fragment
\begin{multline*}
H_{n}(X^{n-1},X(n-1)^a_b)\to H_n(X^n,X(n)^a)\to\\
\to  H_n(X^n,X(n)^a_b)\to H_{n-1}(X^{n-1},X(n-1)^a_b)
\end{multline*}
We denote the last map by $\tau^n_{n-1}$. 
Here we also include the case $n=1$, by stipulating  that
$H_0(X^0,X(0)^a_b)=H_{0}(\{a,b\},\{a\})$ (which is trivial when $a=b$ and $\ZZ$ otherwise). We  shall interpret each $\tau^n_{n-1}$ as a truncation map. 

We denote  cokernel of $H_{n}(X^{n-1},X(n-1)^a_b)\to H_n(X^n,X(n)^a)$  by $C_n(X)^a_b$ so that it can be identified with the kernel of $\tau^n_{n-1}$. We shall later see (Lemma \ref{lemma:motivic_fg}) that the natural map 
$H_1(X)^{\otimes n}\to H_n(X^n,X(n)^a)$ is an isomorphism, so that $C_n(X)^a_b)$ then appears as a quotient of
$H_1(X)^{\otimes n}$. This quotient turns out to be independent of $a$ and $b$ and is identified in the corollary below. For this we recall that the subquotient $\Ical_a^n/\Ical_a^{n+1}$ of $\ZZ\pi_a$  is  independent of $a$ (for inner automorphisms of $\pi_a$ acts trivially on it) and  functorial in $X$. We denote this subquotient by $S_n(X)$, so that $S_\pt(X)=\gr_{\Ical_a}\ZZ\pi_a$ is the graded algebra associated to the $\Ical_a$-adic filtration of $\ZZ\pi_a$.

\begin{corollary}\label{cor:motivic_fg}
For  $n\ge 1$, we have a commutative diagram of short exact sequences
\begin{center}
\begin{small}
\begin{tikzcd}
0 \arrow[r] & S_n(X)\arrow[r]\arrow[d, "\cong"] &  \ZZ\pi_X(a,b)/\Ical_a^{n+1} \arrow[r, "truncation"]\arrow[d, "\kappa_n"]  &  \ZZ\pi_X(a,b)/\Ical_a^{n}\arrow[r]\arrow[d, "\kappa_{n-1}"] & 0\\
0 \arrow[r] & C_n(X)^a_b \arrow[r] &H_n(X^n,X(n)^a_b) \arrow[r, "\tau^n_{n-1}"] & H_{n-1}(X^{n-1},X(n-1)^a_b)\arrow[r] &  0
\end{tikzcd}
\end{small}
\end{center}
In particular, truncation maps $\ker(\kappa_n)$ isomorphically onto $\ker(\kappa_{n-1})$.
\end{corollary}
\begin{proof}
Our conventions ensure that this is  true for $n=1$.
For $n>1$ the corollary boils down to the commutativity of the right hand square. 
This follows the very definition of the map $\kappa_n$: given a path $\g$ in $X$ from $a$ to $b$, then $\tau^n_{n-1}\kappa_n$  takes the image $[\g]_n$ of 
$\g$ in $\ZZ\pi_X(a,b)_n$ to the restriction of $\triangle_n[\g^n]$ to the face $t_n=1$ and this is just  
$\triangle_{n-1}[\g^{n-1}]$, that is, the image of $[\g]_{n-1}$ under $\kappa_{n-1}$.
\end{proof}

\begin{remark}\label{rem:connectedness}
So the projective limit of the system
\begin{equation}\label{eqn:proj}
\textstyle \cdots \xrightarrow{\tau}H_n(X^n, X(n)^a_a)\xrightarrow{\tau} H_{n-1}(X^{n-1}, X(n-1)^a_a)\xrightarrow{\tau}  \cdots
\end{equation}
gives us the augmentation ideal $\hat\Ical_a=\varprojlim_n \Ical_a/\Ical_a^n$ of  $\hat\pi_a:=\varprojlim_n \pi_a/\Ical_a^n$. 
Since $H_n(X^n, X(n)^a_a)$ is $(n-1)$-connected, the universal coefficient theorem tells us that the dual of $H_n(X^n, X(n)^a_a)$ is $H^n(X^n, X(n)^a_a)$.
Hence $\varinjlim_n H^n(X^n, X(n)^a_a)$ gives us the continuous dual $\varinjlim_n \Hom (\Ical_a/\Ical_a^n, \ZZ)$ of $\hat\Ical_a$.
\end{remark}

\begin{remark}\label{rem:}
The system $\{\kappa_n\}_n$  is evidently functorial in the homotopy category of triples  $(X;a,b)$ of the type considered here: any continuous map 
$f:(X;a,b)\to (X';a',b')$ in our category induces a map of  commutative ladders which only depends on the relative homotopy class of $f$.
In particular, the group of self-homotopy equivalences of $(X; a,b)$ acts on the projective system \eqref{eqn:proj}.  The inner automorphisms of $\pi_a$ acting on $\ZZ\pi_b^a/\Ical_a^{n+1}$ are thus realized by `point pushing' of $a$ along a loop in $X$ (take the loop in $X\ssm\{b\}$ when $b\not=a$); in case if $X$ is a topological manifold, these are in fact realized  by homeomorphisms of $(X;a,b)$.
\end{remark}

\begin{remark}\label{rem:}
Note that $S_\pt(X)$  has the structure of a graded Hopf algebra. We have $S_\pt(X^2)=S_\pt(X)\otimes S_\pt(X)$.
 and it  follows from  the discussion  in Section \ref{sect:coproduct} that its primitive part in degree $n$ is identified with the equalizer of $(D_*, i'_*+i''_*) :C_n(X)^a_b\rightrightarrows  C_n(X^2)^{a^2}_{b^2}$. 
 
 For $\alpha\in \pi_a$, we have $a:=\alpha-1\in \Ical_a$ and the coproduct takes $a$ to 
$a\otimes 1+1\otimes a+a\otimes a\equiv a\otimes 1+1\otimes a\pmod{\Ical^2_a}$.
The module $S_n(X)$ is spanned by monomial expressions $a_n a_{n-1}\cdots a_1$ with $a_i\in H_1(X)$ and 
the coproduct takes such an expression to 
\[
\textstyle (a_n\otimes 1+1\otimes a_n)\cdots (a_1\otimes 1+1\otimes a_1)=\sum_I a_I\otimes a_{I'},
\]
where the sum is over all subsets  $I\subset \{n,\dots, 1\}$ (with the induced order, so if $I=\{i_k>i>\cdots >i_1\}$, then
$a_I=a_{i_k}\cdots a_{i_1}$) and $I'=\{n, \dots, 1\}\ssm I$.

When  $\pi_a$ is free, then it is known that $H_1(X)^{\otimes n}\to \Ical_a^n/\Ical_a^{n+1}$ is an isomorphism. So  $S_\pt(X)$ is then the tensor algebra on $H_1(X)$. 
\end{remark}

\section{The composition map}\label{sect:comp} 
Given $a,b,c\in X$, then path composition $\pi_X(b,c)\times \pi_X(a,b)\to \pi_X(a,c)$ induces 
a bilinear map $\ZZ\pi_X(b,c) \times \ZZ\pi_X(a,b)\to\ZZ \pi_X(a,c)$ (this  in fact descends to an  isomorphism
$\ZZ\pi_X(b,c) \otimes_{\ZZ\pi_b} \ZZ\pi_X(a,b)\cong\ZZ \pi_X(a,c)$). By reducing modulo $\Ical_b^{n+1}$ we 
find  a bilinear map
\[
\ZZ\pi_X(b,c)_n\times \ZZ \pi_X(a,b)_n\to \ZZ \pi_X(a,c)_n.
\]
In the special case when $a=b=c$, we can do slightly better, for then path composition defines  a natural bilinear map
\[
\Ical_a/\Ical_a^{n}\times \Ical_a/\Ical_a^{n}\to \Ical_a^2 /\Ical_a^{n+1}.
\]
In view of Theorem \ref{thm:motivic}, these compositions  correspond to bilinear maps
\begin{equation}\label{eqn:comp1}
H_n(X^n, X(n)^b_c)\times H_n(X^n, X(n)^a_b)\to H_n(X^n, X(n)^a_c), \text{  resp.}\\
\end{equation}
\begin{equation}\label{eqn:comp2}
H_{n-1}(X^{n-1}, X(n-1)^a_a)\times H_{n-1}(X^{n-1}, X(n-1)^a_a)\to H_{n}(X^{n}, X(n)^a_a).
\end{equation}
Our goal is to explicate these maps. The case $n=1$ is instructive, as it shows that even then the issue is not that simple:

\begin{example}\label{example:}
The map \eqref{eqn:comp2} is then without interest, as it is represented by the trivial map  $0\times 0\to H_1(X,\{a\})$. 
For the  \eqref{eqn:comp1}, we note that $H_1(X,\{b,a\})$ contains $H_1(X)$ with quotient $\tilde H_0(\{b,a\})\cong \ZZ(b-a)$ and similarly for
 $H_1(X,\{c,b\})$. The quotient of $H_1(X,\{c,b\})\otimes H_1(X,\{b,a\})$ by $H_1(X)\otimes H_1(X)$ fits in a cartesian diagram

\hskip-7mm
\begin{tikzcd}
H_1(X,\{c,b\})\otimes H_1(X,\{b,a\})/H_1(X)\otimes H_1(X)\arrow[r] \arrow[d] &
\tilde H_0(\{c,b\})\otimes H_1(X,\{b,a\})\arrow[d] \\
H_1(X,\{c,b\})\otimes \tilde H_0(\{a,b\}) \arrow[r] &\tilde H_0(\{c,b\})\otimes\tilde H_0(\{a,b\})
\end{tikzcd}
 \noindent We regard $\tilde H_0(\{c,b\})\otimes H_1(X,\{b,a\})$ and $H_1(X,\{c,b\})\otimes \tilde H_0(\{a,b\})$
as $H_1(X)$-torsors over $\tilde H_0(\{c,b\})\otimes\tilde H_0(\{a,b\}$ and then take the fiber product as torsors:
if $u,u'\in \tilde H_0(\{c,b\})\otimes H_1(X,\{b,a\})$ and $v, v'\in H_1(X,\{c,b\})\otimes \tilde H_0(\{a,b\})$ lie over
$w\in \tilde H_0(\{c,b\})\otimes\tilde H_0(\{a,b\}$, then we identify $(u,v)$ with $(u',v')$ if and only if $u-u'=v-v'$.
The resulting quotient  of $H_1(X,\{c,b\})\otimes H_1(X,\{b,a\})$ can be identified with $H_1(X,\{a,c\})$.
\end{example}

This  generalizes as follows.

\begin{theorem}\label{thm:composition}
There is a natural complex of abelian groups
\begin{multline*}
H_n(X^n, X(n)^b_c)\otimes H_n(X^n, X(n)^a_b)\to 
 \substack{\bigoplus\\ \mu+\nu=n}  \:  H_{\mu}(X^{\mu}, X(\mu)^b_c)\otimes H_\nu (X^\nu, X(\nu)^a_b)\to\\
 \to \substack{\bigoplus\\ \mu+\nu=n-1}  \: H_{\mu}(X^{\mu}, X(\mu)^b_c)\otimes H_\nu (X^\nu, X(\nu)^a_b)
\end{multline*}
for which  the  kernel $K$  of the second map comes with a natural map    $K\to H_n(X^{n}, X(n)^a_c)$.
It has the property that  the resulting map 
\[
H_n(X^n, X(n)^b_c)\otimes H_n(X^n, X(n)^a_b)\to H_n(X^{\mu}, X(\mu)^a_c
\]
represents the path composition map \eqref{eqn:comp1}.

In case $a=b=c$, the summands associated with $\nu=0$ and $\mu=0$ are trivial (they involve the trivial $H_0 (X^0, X(0)^a_a)$ as tensor factor) and this makes the above map factor through 
$H_{n-1} (X^{\mu}, X(\mu)^b_c)\otimes H_{n-1}(X^\nu, X(\nu)^a_b)$. This reproduces the map \eqref{eqn:comp2}. 
\end{theorem}

The rest of this section is devoted to the proof of Theorem \ref{thm:composition}. Since we already treated  the case $n=1$, we assume in what follows that $n>1$.

\begin{lemma}\label{lemma:zero} When $\mu+\nu={n-1}$, the inclusion  
\[
j: (X^{\mu}, X(\mu)^b_c)\times \{b\}\times (X^\nu, X(\nu)^a_b\subset (X^n, X(n)^a_c)
\]
 induces the zero map on $H_n$.
\end{lemma}

The proof of Lemma \ref {lemma:zero}  requires another lemma.

\begin{lemma}\label{lemma:motivic_fg}
The pair $ (X^n, X(n)^a)$ is $(n-1)$-connected and the composite $ H_1(X)^{\otimes n}\to H_n(X^n)\to H_n (X^n, X(n)^a)$  is an isomorphism.
\end{lemma}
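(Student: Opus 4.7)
The plan is to prove both assertions simultaneously by induction on $n$, closely following the proof of Theorem~\ref{thm:motivic}. The base case $n=1$ is immediate: $X(1)^a=\{a\}$, the pair $(X,\{a\})$ is $0$-connected since $X$ is path-connected, and the composite $H_1(X)\to H_1(X)\to H_1(X,a)$ is the canonical isomorphism.

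For the $(n-1)$-connectedness I would essentially copy Step~1 of Theorem~\ref{thm:motivic}: project $(X^n,X(n)^a)$ onto the first coordinate $x_1$. The fibre over $x=a$ is the contractible pair $(X^{n-1},X^{n-1})$, while over $x\ne a$ it is $(X^{n-1},X(n-1)^x)$ (with $x$ in the role of $a$), which is $(n-2)$-connected by induction. The standard fibre-sequence argument gives $(n-2)$-connectedness of the total pair and surjectivity of $\pi_{n-1}$ from any fibre pair; the contractible fibre over $a$ kills this image, yielding $(n-1)$-connectedness.

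For the identification of $H_n$ I would mirror Step~2. Equip $X$ with an oriented CW structure having $\{a\}$ as its unique $0$-cell, refined in $X^n$ so that $X(n)^a$ is a subcomplex. By the inductive hypothesis, the fibrewise top homology $H_{n-1}(X^{n-1},X(n-1)^x)$ is canonically $H_1(X)^{\otimes(n-1)}$ for $x\ne a$ (independent of $x$) and vanishes at $x=a$; so the associated ``Leray sheaf'' is the constant sheaf $\underline{H_1(X)^{\otimes(n-1)}}$ on $X\ssm\{a\}$, extended by zero at $a$. Repeating the chain-level argument of Step~2, but now with trivial monodromy so that the covering space splits as the disjoint union $X\times H_1(X)^{\otimes(n-1)}$, one obtains
\[
H_n(X^n,X(n)^a)\;\cong\;H_1(X,a)\otimes H_1(X)^{\otimes(n-1)}\;=\;H_1(X)^{\otimes n}.
\]
A direct chase shows that the inverse of this isomorphism is precisely the composite in the statement, since the iterated cross product $c_n\times\cdots\times c_1\in H_n(X^n)$ corresponds to $c_n\otimes(c_{n-1}\times\cdots\times c_1)$ under the identification.

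The main technical point is checking that the inductive identification $H_{n-1}(X^{n-1},X(n-1)^x)\cong H_1(X)^{\otimes(n-1)}$ is genuinely canonical (natural in $x$), so that the Leray sheaf really is constant rather than only locally constant. This comes from the fact that the identification is built from the iterated cross product $H_1(X)^{\otimes(n-1)}\to H_{n-1}(X^{n-1})$, which does not involve $x$ at all, followed by the relative-homology quotient; once this naturality is granted, the rest of the argument runs in parallel to Step~2 of Theorem~\ref{thm:motivic}.
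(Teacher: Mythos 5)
Your proof is correct, but it takes a genuinely different route from the paper's. You re-run the quasi-fibration argument of Theorem \ref{thm:motivic} (Steps 1 and 2) for the pair $(X^n,X(n)^a)$, fibering over the coordinate $x_1$ that carries the condition $x_1=a$; the new ingredient you need, and correctly identify as the crux, is that the resulting coefficient system $x\mapsto H_{n-1}(X^{n-1},X(n-1)^x)$ has trivial monodromy. Your justification for this is sound: the monodromy commutes with the map from the ($x$-independent) group $H_{n-1}(X^{n-1})$, and since the composite from $H_1(X)^{\otimes(n-1)}$ is surjective by induction, the monodromy must be the identity. The paper instead avoids local systems altogether: it writes $X(n)^a$ as the union of $X\times X(n-1)^a$ with the image of the partial diagonal $\iota(x_{n-1},\dots,x_1)=(x_{n-1},x_{n-1},\dots,x_1)$, uses excision to identify $H_*(X(n)^a,\,X\times X(n-1)^a)$ with $H_*(X^{n-1},X(n-1)^a)$, and observes that $\iota$ is a section of the projection forgetting the first factor, so the long exact sequence of the triple $(X^n,X(n)^a,X\times X(n-1)^a)$ splits into short exact sequences; the K\"unneth formula and induction then give both the vanishing below degree $n$ and the computation of $H_n$, with the $(n-1)$-connectedness extracted at the end via the relative Hurewicz theorem. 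The paper's argument is shorter and keeps explicit control of the $\Tor$ terms, whereas yours has the merit of explaining structurally why the one-sided pair $(X^n,X(n)^a)$ yields a \emph{constant} coefficient system while the two-sided pair $(X^n,X(n)^a_b)$ yields the nonconstant one with stalks $\ZZ\pi^a_x/\Ical_a^n$. (One cosmetic slip: having fibered over $x_1$, the tensor factor split off from $c_n\times\cdots\times c_1$ should be $c_1$, not $c_n$.)
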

\begin{proof}
We prove this with induction on $n$. This begins trivially for $n=1$ and we therefore assume $n>1$.
Consider the projection 
\[
f: (X^n,X(n)^a)\to X^{n-1}, (x_n, \dots, x_1)\mapsto (x_{n-1}, \dots, x_1) 
\]
The fiber over a point $(x_{n-1}, \dots, x_1)\in X^{n-1}$ is a copy of $(X, x_{n-1})$ unless this point lies in 
$X(n-1)^a$, in which case the fiber is a copy of $(X,X)$. By induction $(X^{n-1},X(n-1)^a)$ is $(n-2)$-connected and 
 $ H_1(X)^{\otimes (n-1)}\to H_{n-1} (X^{n-1}, X(n-1)^a)$  is an isomorphism. On the other hand, the fiber pairs 
 $(X, x_{n-1})$ are $0$-connected and $H_1(X)\to H_1(X, x_{n-1})$ is an isomorphism. This implies that $(X^n,X(n)^a)$ is 
 $(n-1)$-connected and that (by a Leray-Hirch argument) the natural map
\[
H_1(X)^{\otimes n}\cong H_1(X)\otimes H_{n-1} (X^{n-1}, X(n-1)^a)\to H_n(X^n)\to H_n (X^n, X(n)^a)
\]
 is an isomorphism. The lemma follows.
\end{proof}

\begin{proof}[Proof of lemma \ref{lemma:zero}.]
The argument that follows is symmetric in $\nu$ and $\mu$ and this allows us to assume that  $\nu>0$.
Then  $j$ factors as
\[
(X^{\mu}, X(\mu)^b_c)\times\{b\}\times (X^\nu, X(\nu)^a_b)\subset (X^\mu,X(\mu)^b_c)\times (X^{1+\nu},X(1+\nu)^a)\subset (X^n, X(n)^a_c)
\]
By  Theorem \ref{thm:motivic}  the pair $(X^{\mu}, X(\mu)^b_c)$ is  $(\mu-1)$-connected and by    Lemma \ref{lemma:motivic_fg},  the pair $(X^{1+\nu},X(1+\nu)^a)$ is  $\nu$-connected. Since $n=\mu+\nu+1$, the K\"unneth theorem implies that
\[
H_n(X^\mu,X(\mu)^b_c)\times (X^{1+\nu},X(1+\nu)^a)\cong H_\mu(X^\mu,X(\mu)^b_c)\otimes H_{1+\nu}(X^{1+\nu},X(1+\nu)^a).
\]
It therefore suffices to show that $\{b\}\times (X^\nu, X(\nu)^a_b)\subset (X^{1+\nu},X(1+\nu)^a)$ induces the zero map on $H_{1+\nu}$. This follows from Lemma \ref{lemma:motivic_fg}, which states that
\[
H_{1}(X)^{\otimes(1+\nu)}\cong H_{1+\nu}((X,a)^{1+\nu})\to H_{1+\nu}(X^{1+\nu},X(1+\nu)^a)
\]
is an isomorphism, and so by fixing $x_{1+\nu}=b$ we get zero. 
\end{proof}

Recall that if $\g$ is a path in $X$ from $a$ to $b$ and $\delta$ a path from $b$ to $ c$, then its composite $\delta\gamma$ is the path in $X$ that assigns to $t\in[0,\half]$ the point $\g(2t)$ and to $t\in[\half,1]$ the point $\delta(2t-1)$. This suggests we  decompose $\triangle^n$ into pieces that are defined  by the closed  half spaces $t_\nu \le \half$ and  $t_\nu \ge \half$ and their nonempty intersections. Such an intersection  determines and is determined  by a pair of nonnegative integers $(\nu, \mu)$ with $\nu +\mu\le n$: it is then given as
\begin{equation}\label{eqn:simplexpiece}
1\ge t_n \ge\cdots \ge t_{n+1-\mu} \ge \half=t_{n-\mu}=\cdots=t_{\nu+1}\ge  t_\nu \ge\cdots \ge t_1\ge 0.
\end{equation}
We identify this intersection in the evident manner with $\triangle^\mu\times\triangle^\nu$. The  restriction of  $\delta\gamma$ to it is then 
$(\delta^{\mu}|\triangle^{\mu})\times (\g^\nu| \triangle^\nu)$ (see Figure \ref{fig:dec}) and  defines 
\[
\triangle^{\mu}[\delta]\otimes \triangle^\nu[\g^{\nu}]\in H_{\mu}(X^{\mu}, X(\mu)^b_c)\otimes H_\nu(X^\nu, X(\nu)^a_b),
\]
where we recall that we agreed to interpret $H_0(X^0, X(0)^p_q)$ as $0$ or $\ZZ$ according to whether  or not $p=q$.

This leads us  to introduce an auxiliary space that is defined as follows. For nonnegative integers $\nu, \mu$ with $\nu+\mu\le n$, put   
\[
{}^\mu X^\nu\{b\}:=X^\mu\times\{\underbrace{(b,b,\dots, b)}_{n-\mu-\nu}\}\times X^\nu
\]
(so when $\nu+\mu=n$, this is just a copy of $X^n$). 
It is clear that if  $\nu\ge \nu'>0$ and $\mu\ge \mu'>0$, then ${}^{\mu'} X^{\nu'}\{b\}\subset {}^\mu X^\nu\{b\}$. We glue the ${}^\mu X^\nu\{b\}$  along these inclusions, schematically

$\dots$\begin{tikzcd}[column sep=small]
  &X^n&                                              & X^n&                                             & X^n            \\
X^{n-1}\arrow[ur, "j_{3}", hook]   &     &  X^{n-1}\arrow[ur, "j_2", hook]\arrow[ul, "j_2" ', hook] &         & X^{n-1}\arrow[ul, "j_{1}" ', hook]\arrow[ur,"j_{1}", hook] &       
\end{tikzcd} \\
where $j_\nu $ inserts $b$ in the slot indexed by $\nu$, and denote the result $Z$. So $Z$ is a connected union of $n+1$ copies of $X^n$  
and there is an obvious  projection $Z\to X^n$ of `degree' $n+1$. 
Our main reason for introducing this space  $Z$ is that the map  $(\delta\gamma)^n|\triangle^n$ naturally lifts to $Z$ by letting it on the piece defined by \eqref{eqn:simplexpiece} take its values in the image of  ${}^\mu X^\nu\{b\}$. 

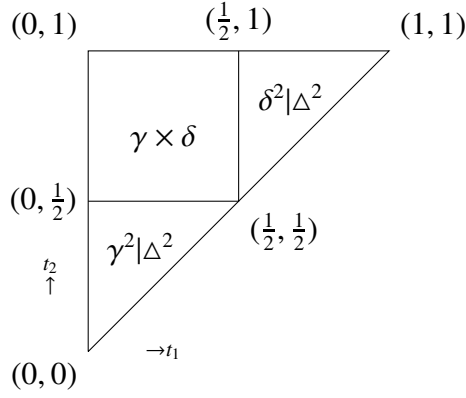
\begin{figure}\label{fig:dec}
\begin{centering}
\begin{tikzpicture}
\draw (0,0) --(0,4) -- (4,4)--(0,0);  
\draw (0,2) --(2,2) -- (2,4);
\node [above] at (1,2.5) {$\gamma\times\delta$};
\node [above] at (0.7,1) {$\gamma^2|\triangle^2$};
\node [above] at (2.7,3) {$\delta^2|\triangle^2$};
\node [below left] at (0,0) {$(0,0)$};
\node [left] at (0,2) {$(0,\half)$};
\node [above left] at (0,4) {$(0,1)$};
\node [above] at (2,4) {$(\half,1)$};
\node [above right] at (4,4) {$(1,1)$};
\node [below right] at (2,2) {$(\half,\half)$};
\node  at  (1,0) {$\substack{\rightarrow t_1}$};
\node  at  (-0.5,1) {$\substack{t_2\\\uparrow}$};
\end{tikzpicture}
\end{centering}
\caption{How $\delta *\gamma|\triangle^2$ is  described  as a sum of three (rescaled) maps.}
\end{figure}

We define a subset ${}^\mu X^\nu\{b\}^a_c\subset {}^\mu X^\nu\{b\}$ by
\[
{}^\mu X^\nu\{b\}^a_c:=\big(X^\mu\times\{b\}^{n-\mu-\nu}\times X(\nu)^a\big) \cup \big(X(\mu)_c\times\{b\}^{n-\mu-\nu}\times X^\nu\big)
\]
(so for example, ${}^0X^{n}\{b\}^a_c=X(n)^a$). In other words, the pair $( {}^\mu X^\nu\{b\},{}^\mu X^\nu\{b\}^a_c)$ 
decomposes as the product 
\[
( {}^\mu X^\nu\{b\},{}^\mu X^\nu\{b\}^a_c):=(X^\mu, X(\mu)_c)\times\{\underbrace{(b,b,\dots, b)}_{n-\mu-\nu}\}\times (X^\nu, X(\nu)^a).
\]
We put  $Z^a_c:=\cup_{\mu, \nu}{}^\mu X^\nu\{b\}^a_c\subset Z$.
Since the projection $Z\to X^n$ makes   ${}^\mu X^\nu\{b\}^a_c$ land in $X(n)^a_c$,  we have  a map of pairs 
\[
pr: (Z, Z^a_c)\to (X^n,X(n)^a_c).
\] 

\begin{proof}[Proof of Theorem \ref{thm:composition}]
Let  $Z'\subset Z$ be the union of $Z^a_c$ and the locus where distinct copies of $X^n$ meet,  so
$Z'= Z^a_c\cup \cup_{\mu+\nu=n-1} X^\mu \times\{b\}\times X^\nu$. 
This means that when $\mu+\nu=n$, the pair $({}^{\mu} X^\nu\{b\} ,Z'\cap{}^{\mu} X^\nu\{b\})$  decomposes as $(X^{\mu}, X(\mu)^b_c)\times (X^\nu, X(\nu)^a_b)$, so that we have we a relative homeomorphism  
\[
\textstyle \coprod_{\mu+\nu={n}}(X^{\mu}, X(\mu)^b_c)\times (X^\nu, X(\nu)^a_b)\to  (Z, Z'), \\
\]
We  find in the same manner a relative homeomorphism  
\[
\textstyle \coprod_{\mu+\nu={n-1}}(X^{\mu}, X(\mu)^b_c)\times (X^\nu, X(\nu)^a_b)\to  (Z', Z^a_c)
\]
(here we use that $n>1$).
Both  induce isomorphisms on relative homology by excision.  By Theorem \ref{thm:motivic} the  left hand side is $(n-1)$-connected resp.\ $(n-2)$-connected and hence so is the right hand side. 
By feeding  this in the following fragment the long exact homology sequence of the triple $(Z,Z', Z^a_c)$, 
\[
H_n(Z',Z^a_c)\to H_n(Z,Z^a_c)\to H_n(Z,Z')\to H_{n-1}(Z', Z^a_c), 
\]
we get the exact sequence that appears in the statement of Theorem \ref{thm:composition}:
\begin{multline}\label{eqn:exactfragment}
\oplus_{\mu+\nu={n-1}} H_n\big((X^{\mu}, X(\mu)^b_c)\times (X^\nu, X(\nu)^a_b\big)\to H_n(Z,Z^a_c)\to\\
\to  \oplus_{\mu+\nu=n} H_\mu (X^{\mu}, X(\mu)^b_c)\otimes H_\nu(X^\nu, X(\nu)^a_b)\to\\
\to \oplus_{\mu+\nu={n-1}} H_\mu (X^{\mu}, X(\mu)^b_c)\otimes H_\nu(X^\nu, X(\nu)^a_b)
\end{multline}
Recall that  $K$ stands for the kernel of the last arrow of the exact sequence \eqref{eqn:exactfragment}. So this is also the cokernel of the first arrow: we  have an  exact sequence 
\[
\oplus_{\mu+\nu={n-1}} H_n\big((X^{\mu}, X(\mu)^b_c)\times (X^\nu, X(\nu)^a_b\big)\to H_n(Z,Z^a_c)\to  K\to 0.
\]
Lemma \ref{lemma:zero}  tells us that the map $pr_*: H_n(Z,Z^a_c)\to H_n(X^n, X(n)^a_c)$ has the property that its precomposite with the first map of \eqref{eqn:exactfragment} is zero. It follows that  we have an induced map $K\to  H_n(X^n, X(n)^a_c)$. To complete the  proof of Theorem \ref{thm:composition} it remains to define the map 
\[
H_n (X^{\mu}, X(\mu)^b_c)\otimes H_n(X^\nu, X(\nu)^a_b)\to K.
\] 
If  $\alpha\in H_n(X^n,X(n)^a_b)$ and $\beta\in H_n(X^n,X(n)^b_c)$,  then the  truncation maps determine 
an image of $\beta\otimes\alpha$ in
$\oplus_{\mu+\nu=n} H_\mu (X^{\mu}, X(\mu)^b_c)\otimes H_\nu(X^\nu, X(\nu)^a_b)$. This image   lies in $K$, because these truncation maps are compatible. We  have thus obtained  a well-defined map 
\[
H_n (X^{\mu}, X(\mu)^b_c)\otimes H_n(X^\nu, X(\nu)^a_b)\to H_n(X^n, X(n)^a_c).
\] 
The construction makes it clear that this  is the map  defined by path composition.
\end{proof}

\begin{example}\label{example:}
The multiplication  $\Ical_a/\Ical_a^2\otimes \Ical_a/\Ical_a^2\to \Ical^2_a/\Ical_a^3$ is given by the composite 
of K\"unneth multiplication $H_1(X)\otimes H_1(X)\to H_2(X^2)$ and the natural map
$H_2(X^2)\to H_2(X^2, X(2)^a_a)$. The latter is easily verified to be the cokernel of the map $H_2(X)\to H_1(X)\otimes H_1(X)$ defined by the diagonal embedding (and that is dual to the cup product).
\end{example}

\begin{remark}\label{rem:}
In case $X$ is a complex algebraic variety, the identification of $\ZZ\pi_X(a,b)_n$ with $H_n(X^n, X(n)^a_b)$ 
(or $\ZZ\oplus H_n(X^n, X(n)^a_a)$ when $a=b$) can be used to define a mixed Hodge structure on 
$\ZZ\pi_X(a,b)_n$. The geometric characterization of its structural maps (truncation, composition and the coproduct, defined when $a=b$) shows that these are morphisms of mixed Hodge structures. This also shows that any  such structure on $\ZZ\pi_X(a,b)_n$  which is `sufficiently natural' must be this one.
Indeed, we thus  reproduce the mixed Hodge structure on these Hopf algebras and  modules that was defined by Hain (see \cite{hain:pi1}, \cite{hz}). 
\end{remark}

\end{document}